\documentclass[oneside,a4paper]{article}
\usepackage{xcolor}
\usepackage[colorlinks=true,linkcolor=blue]{hyperref}
\newcommand{\markupdraft}[2]{
    \ifthenelse{\equal{#1}{display}}{#2}{}
    \ifthenelse{\equal{#1}{color}}{\color{#2}}{}
}

\newcommand{\newcolored}[3][]{{\markupdraft{color}{#2}#3}
    \ifthenelse{\equal{#1}{}}{}{\markupdraft{display}{{\color{yellow!70!black}[#1]}}}} 

    
\newcommand{\TODO}[2][]{\markupdraft{display}{{\color{red}~\\\noindent== TODO: #2 {\color{yellow}(#1)} ==\\}}}
\newcommand{\todo}[2][]{\markupdraft{display}{{\color{red}\noindent++TODO: #2 {\color{yellow}(#1)}++}}}
\renewcommand{\markupdraft}[2]{}

\usepackage{cmap}
\usepackage[utf8x]{inputenc}
\usepackage[T1]{fontenc}
\usepackage[english]{babel}
\usepackage{graphicx}
\usepackage{cite}
\usepackage{xcolor}
\usepackage{algorithm}
\usepackage{algpseudocode}
\usepackage{subcaption}

\usepackage{latexsym}
\usepackage{amsmath,amssymb} 
\usepackage{mathtools}       
\usepackage{stmaryrd}        
\usepackage{bm}              

\usepackage{amsthm}  
\newtheorem{theorem}{Theorem}[section]

\newtheorem{lemma}[theorem]{Lemma}
\theoremstyle{definition}

\providecommand{\argmin}{\operatornamewithlimits{argmin}} 
\DeclareMathOperator{\diag}{diag} 
\providecommand{\T}{\mathrm{T}} 
\renewcommand{\geq}{\geqslant} 
\renewcommand{\leq}{\leqslant} 
\DeclarePairedDelimiterX{\inner}[2]{\langle}{\rangle}{#1, #2}
\DeclarePairedDelimiter{\norm}{\lVert}{\rVert}

\providecommand{\bigO}{\mathcal{O}}

\begin{document}

\title{Fast Eigen Decomposition for Low-Rank Matrix Approximation}
\author{Youhei Akimoto \\ Faculty of Engineering, Shinshu University \\ Wakasato 4-17-1, Nagano, 380-8553, JAPAN \\ \url{y_akimoto@shinshu-u.ac.jp}}

\date{}
\maketitle

\begin{abstract}
  In this paper we present an efficient algorithm to compute the eigen decomposition of a matrix that is a weighted sum of the self outer products of vectors such as a covariance matrix of data. A well known algorithm to compute the eigen decomposition of such matrices is though the singular value decomposition, which is available only if all the weights are nonnegative. Our proposed algorithm accepts both positive and negative weights.
\end{abstract}
\tableofcontents

\newcommand{\eye}{\mathbf{I}}
\newcommand{\W}{\mathbf{W}}
\newcommand{\Lw}{\mathbf{L}_{w}}
\newcommand{\Sw}{\mathbf{S}_{w}}
\newcommand{\Rw}{\mathbf{R}_{w}}

\providecommand{\A}{\mathbf{A}}
\providecommand{\Qa}{\mathbf{Q}_{a}}
\providecommand{\B}{\mathbf{B}_a}
\providecommand{\Ea}{\mathbf{E}_{a}}
\providecommand{\Da}{\mathbf{D}_{a}}
\providecommand{\Eb}{\mathbf{E}_{b}}
\providecommand{\Db}{\mathbf{D}_{b}}
\providecommand{\X}{\mathbf{X}}
\providecommand{\Qc}{\mathbf{Q}_{c}}
\providecommand{\Bc}{\mathbf{B}_{c}}
\providecommand{\Xt}{\tilde{\mathbf{X}}}
\providecommand{\Qx}{\mathbf{U}_x}
\providecommand{\Rx}{\mathbf{R}_{x}}




\section{Introduction}

We focus on a positive definite symmetric matrix of the form
\begin{equation}
  \mathbf{A} = a \eye + \sum_{i=1}^{k} w_i \bm{x}_i \bm{x}_i^\T, \label{eq:a1}
\end{equation}
where a is a nonnegative real number, $w_i$ are real numbers, $\bm{x}_i$ are $m$ dimensional vectors. In a matrix form, we can rewrite it as
\begin{equation}
  \mathbf{A} = a \eye + \mathbf{X} \W \mathbf{X}^\T, \label{eq:a2}
\end{equation}
where $\mathbf{X} = (\bm{x}_1, \dots, \bm{x}_k)$ is a $m \times k$ matrix, $\W = \diag(w_1,\dots,w_k)$ is a $k \times k$ diagonal matrix. We assume $k \ll m$. Such a matrix appears in many situations, in particular, image and signal processing. In image processing, $\bm{x}_i$ may represent a vector of $m$ pixels of an image. Since the number of pixels, $m$, is in general large and it is often the case that we have a fewer number, $k$, of images than $m$.

Sometimes we need to compute the eigen decomposition of such a matrix. A well known application is the eigenface computation for face image recognition \cite{eigenface}, where the eigenvectors are the representative face images. 
It is well known that if $w_i$ are all nonnegative, instead of computing the decomposition of $\mathbf{A}$ directly with the cost of $\bigO(m^\omega)$, for some $\omega > 2$ and $\omega = 3$ in practice, we can compute the eigenvalue decomposition $\mathbf{A} = \Qa\Da\Qa^\T$ in $\bigO(mk^2)$ through the singular value decomposition (SVD) of a matrix $\mathbf{Y} = \X \sqrt{\W}$, where $\sqrt{\W}$ is a diagonal matrix whose diagonal elements are $\sqrt{w_i}$. Let $\mathbf{Y} = \mathbf{U}\mathbf{S}\mathbf{V}^\T$ be the thin SVD of $\mathbf{Y}$, i.e., $\mathbf{U}$ and $\mathbf{V}$ are $m \times k$ and $k \times k$ matrices with orthonormal columns, $\mathbf{S}$ is a diagonal matrix of dimension $k$. The thin SVD can be computed in $\bigO(mk^2)$. Given $\mathbf{Y} = \mathbf{U}\mathbf{S}\mathbf{V}^\T$, we have
\begin{equation}
  \mathbf{A} = a \eye + \mathbf{X} \W \mathbf{X}^\T = a \eye + \mathbf{Y} \mathbf{Y}^\T = a \eye + \mathbf{U} \mathbf{S} \mathbf{V}^\T \mathbf{V} \mathbf{S} \mathbf{U}^\T = a \eye + \mathbf{U} \mathbf{S}^2 \mathbf{U}^\T.
\end{equation}
From the above equality we have that the greatest $k$ eigenvalues of $\mathbf{A}$ are $a + s_i^2$ and the corresponding eigenvectors are $\bm{u}_i$. The other $m - k$ eigenvalues are all $1$ and the corresponding eigenvectors form a set of $m - k$ arbitrary unit vectors that are orthogonal to each other and orthogonal to all $\bm{u}_i$. However, if some of $w_i$ are negative, the above computation is not applicable anymore. We believe that there are many situations where one would like to incorporate both positive and negative weights in \eqref{eq:a1}, and that the efficient computation of the eigenvalues and the eigenvectors of such matrices will open a novel use of the principal component analysis for high dimensional data.

In this article we present a $\bigO(mk^2)$ computation of the eigen decomposition of a matrix of the form \eqref{eq:a2}, where $w_i$ can be both positive and negative. To the best of our knowledge, no such algorithm has been proposed in the literature, despite that the matrix of the form \eqref{eq:a2} often appears in applications. \todo{experiments}

\section{Motivating Example}

One of our motivating examples is as follows. We would like to maintain a positive definite symmetric matrix $\mathbf{A}_{t}$, with which we define a distance function $d_t(\bm{x}, \bm{0}) = \sqrt{\bm{x}^\T \mathbf{A}_t^{-1} \bm{x}}$ that is used to classify data into two. For each distance computation it costs $\bigO(m^2)$, which is problematic if $m$ is huge, in addition to the inversion of $\mathbf{A}_t$ at once. Therefore, we would like to restrict $\mathbf{A}_t$ to be of the form \eqref{eq:a2} that allows us to compute the distance in $\bigO(mk)$.

In the training phase, we receive a set of training data every iteration that are categorized as either ``regular'' or ``irregular''. To make the distances smaller for regular data and greater for irregular data, we update $\mathbf{A}_t$ as
\begin{equation}
  \tilde{\mathbf{A}}_{t+1} = \alpha \mathbf{A}_{t} + \beta \sum_{i=1}^{n} w_i \bm{x}_i \bm{x}_i^\T, \label{eq:a3}
\end{equation}
where $m$ is the number of data and $\bm{x}_i$ are the data received at one time, $w_i$ is positive if $\bm{x}_i$ is regular and negative if $\bm{x}_i$ is irregular, $\alpha$ and $\beta$ are some learning constants. The negative values for $w_i$ are important to learn irregularity of data actively. The updated matrix is still of the form \eqref{eq:a2}, but $k$ is incremented by $m$. To keep $k$ constant, we approximate $\tilde{\mathbf{A}}_{t+1}$ by a matrix $\mathbf{A}_{t+1}$ of the form \eqref{eq:a2} by solving
\begin{equation}
  \mathbf{A}_{t+1} = \argmin_{\mathbf{B}} \norm{ \ln(\tilde{\mathbf{A}}_{t+1}) - \ln(\mathbf{B}) }
\end{equation}
for $\mathbf{B}$ of the form \eqref{eq:a2} with the same $k$ as $\mathbf{A}_{t}$. Note that the matrix logarithm, $\ln$, is necessary for small eigenvalues to be enhanced and approximated. Otherwise, we will disregard the irregularity.
The solution to the above optimization problem is given by the eigen decomposition of $\tilde{\mathbf{A}}_{t+1}$ \cite{approx}. More precisely, given the eigenvalues $d_i$ of $\tilde{\mathbf{A}}_{t+1}$ that is sorted in the descending order, choose $\tau$ such that $\sum_{i=\tau+1}^{m-k+\tau} [\ln(d_i) - \sum_{i=\tau+1}^{m-k+\tau}\ln(d_i)]^2$. Then the solution to the above optimization problem is given by
\begin{equation}
  \mathbf{A}_{t+1}
  = \left(\prod_{i=\tau+1}^{m-k+\tau}d_i\right)^\frac{1}{m-k} \eye + \sum_{i=1}^{\tau} d_i \bm{q}_i\bm{q}_i^\T + \sum_{i=m-k+\tau+1}^{m} d_i \bm{q}_i\bm{q}_i^\T
  \enspace,
\end{equation}
where $\bm{q}_i$ is the eigenvector of $\tilde{\mathbf{A}}_t$ corresponding to $d_i$. Again, $\mathbf{A}_{t+1}$ is of the form \eqref{eq:a2}. However, every after the matrix update, one needs to compute the eigen decomposition of \eqref{eq:a3}, which costs $\mathcal{O}(m^3)$ in the naive computation. 

\section{Efficient Decomposition}
\providecommand{\QQ}{\mathbf{Q}}
\providecommand{\BB}{\mathbf{B}}
\providecommand{\BB}{\mathbf{B}}
\providecommand{\XX}{\mathbf{X}}
\providecommand{\YY}{\mathbf{Y}}
\providecommand{\nx}{n_x}
\providecommand{\ny}{n_y}

Our objective is to compute the eigenvalues and the eigenvectors of a matrix of the form \eqref{eq:a3} efficiently. Given the decomposition of $\mathbf{A}_{t} = \alpha \mathbf{I} + \mathbf{Q} \mathbf{B} \mathbf{Q}^\T$, where $\QQ$ is an $m \times n$ dimensional matrix with orthonormal columns and $\BB$ is an $n \times n$ dimensional symmetric matrix and $n < m$, the left hand side of \eqref{eq:a3} can be written as follows
\begin{equation}
  \mathbf{A} = \alpha \mathbf{I} + \mathbf{Q} \mathbf{B} \mathbf{Q}^\T + \X\X^\T - \mathbf{Y} \mathbf{Y}^\T \enspace,
  \label{eq:7}
\end{equation}
where $\X = (\sqrt{w_1} \bm{x}_1,\dots,\sqrt{w_{\nx}} \bm{x}_{\nx})$ is an $m \times \nx$ dimensional matrix and $\mathbf{Y} = (\sqrt{-w_{\nx+1}} \bm{x}_{\nx+1},\dots,\sqrt{-w_{\nx+\ny}} \bm{x}_{\nx+\ny})$ is an $m \times \ny$ dimensional matrix and $\nx$ and $\ny$ are the numbers of positive and negative weights, $w_i$, respectively.

The first step is transform $\QQ\BB\QQ^\T + \XX\XX^\T - \YY\YY$ into the form of $\QQ\BB\QQ^\T$, by applying the following lemma repeatedly.
\begin{lemma}\label{lem:blockdecom}
Let $\QQ$ is an $m \times n$ dimensional matrix with orthonormal columns and $\BB$ is an $n \times n$ dimensional symmetric matrix, $n < m$. Let $\X$ be an $m \times k$ dimensional matrix with rank $k \leq m$. Then, the following procedure compute in $\bigO(m(n+k)^2)$ the matrix decomposition of $\QQ\BB\QQ^\T \pm \X\X^\T$ in the form $\Qc\Bc\Qc^\T$, where $\Qc$ is an $m \times (n+k)$ dimensional matrix with orthonormal columns, $\Bc$ is an $n+k$ dimensional symmetric matrix. 
\begin{enumerate}
\item Compute $\Xt = \X - \QQ(\QQ^\T\X)$.
\item Compute the thin SVD of $\Xt = \Qx\mathbf{S}_x\mathbf{V}_x^\T$, where $\Qx$ is an $m \times k$ dimensional matrix with orthonormal columns, $\mathbf{S}_x$ is an $k \times k$ dimensional diagonal matrix, and $\mathbf{V}_x$ is an $k \times k$ orthogonal matrix.
\item Compute $\Rx = \mathbf{S}_x\mathbf{V}_x^\T$
\item Construct $\Qc = \begin{bsmallmatrix}\QQ & \Qx\end{bsmallmatrix}$, $\Bc = \begin{bsmallmatrix}\BB \pm (\QQ^\T\X)(\QQ^\T\X)^\T & \pm (\QQ^\T\X)\Rx^\T \\
  \pm \Rx(\QQ^\T\X)^\T & \pm \Rx\Rx^\T\end{bsmallmatrix}$
\end{enumerate}
\end{lemma}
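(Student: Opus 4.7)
The plan is to verify two things---that the constructed $\Qc$ has orthonormal columns and that $\Qc\Bc\Qc^\T = \QQ\BB\QQ^\T \pm \X\X^\T$---and then to tally the arithmetic cost of the four steps.

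First I would split $\X$ into its $\QQ$-parallel part and its orthogonal residual. Writing $P = \QQ^\T\X$, Step~1 gives $\Xt = \X - \QQ P$, hence $\X = \QQ P + \Xt$, and $\QQ^\T\QQ = \mathbf{I}_n$ immediately yields $\QQ^\T\Xt = 0$, so the columns of $\Xt$ lie in $\operatorname{range}(\QQ)^\perp$. The thin SVD of Step~2 factors $\Xt = \Qx\mathbf{S}_x\mathbf{V}_x^\T = \Qx\Rx$; substituting into $\QQ^\T\Xt = 0$ gives $\QQ^\T\Qx\Rx = 0$, and in the generic case that $\Xt$ has full column rank $k$ (so $\Rx$ is invertible) this forces $\QQ^\T\Qx = 0$. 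Combined with $\QQ^\T\QQ = \mathbf{I}_n$ and $\Qx^\T\Qx = \mathbf{I}_k$, this shows $\Qc = [\QQ,\Qx]$ has orthonormal columns. The one subtle case is when $\Xt$ is rank-deficient: the columns of $\Qx$ corresponding to zero singular values can be chosen inside $\operatorname{range}(\QQ)^\perp$, and standard thin-SVD routines do so automatically, preserving $\QQ^\T\Qx = 0$. This is the one place where the statement glosses over an implementation detail.

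Second, I would substitute $\X = \QQ P + \Qx\Rx$ and expand
\[
\X\X^\T = \QQ PP^\T\QQ^\T + \QQ P\Rx^\T\Qx^\T + \Qx\Rx P^\T\QQ^\T + \Qx\Rx\Rx^\T\Qx^\T .
\]
Multiplying out the block form of $\Qc\Bc\Qc^\T$ with the $\Bc$ from Step~4 produces exactly $\QQ\BB\QQ^\T$ plus $\pm$ the same four cross terms, proving the identity. Symmetry of $\Bc$ is immediate: $\BB$ is symmetric, $PP^\T$ and $\Rx\Rx^\T$ are symmetric, and the $(2,1)$-block $\pm\Rx P^\T$ is the transpose of the $(1,2)$-block $\pm P\Rx^\T$.

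Third, I would tally the cost. Forming $P = \QQ^\T\X$ and then $\QQ P$ is $\bigO(mnk)$; the thin SVD of an $m\times k$ matrix is $\bigO(mk^2)$; $\Rx = \mathbf{S}_x\mathbf{V}_x^\T$ is $\bigO(k^2)$ because $\mathbf{S}_x$ is diagonal; the blocks $PP^\T$, $P\Rx^\T$, and $\Rx\Rx^\T$ total $\bigO(n^2k+nk^2+k^3)$; concatenating to build $\Qc$ is $\bigO(m(n+k))$. Summing gives $\bigO(m(n+k)^2)$, matching the claim. I expect no real obstacle beyond the rank-deficient bookkeeping noted above; the remaining work is block matrix algebra and a routine flop count.
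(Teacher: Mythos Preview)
Your proposal is correct and follows essentially the same route as the paper: split $\X$ into its $\QQ$-component and orthogonal residual, expand $\X\X^\T$ into four blocks, and read off $\Bc$; the flop tally also matches. The only noteworthy difference is in the orthonormality argument: the paper observes directly that the column space of $\Xt=(\eye-\QQ\QQ^\T)\X$ lies in $\operatorname{range}(\QQ)^\perp$, so its left singular vectors (for nonzero singular values) automatically lie there too, which sidesteps the need to invoke invertibility of $\Rx$; your route via $\QQ^\T\Qx\Rx=0$ reaches the same conclusion but forces you to treat the rank-deficient case separately, as you noted.
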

\begin{proof}
The first step takes $\bigO(mnk)$, the second and third steps take $\bigO(mk^2)$ for the thin SVD. To construct $\Qc$, it requires $\bigO(m(m+k))$. To construct $\Bc$, it requires $\bigO(k(n+k)^2)$. Totally, the complexity is bounded by $\bigO(m(n+k)^2)$. 

Since a matrix $\eye - \QQ\QQ^\T$ maps a $m$ dimensional column vector into the subspace orthogonal to the column space of $\QQ$, the left singular vectors of $\Xt = (\eye - \QQ\QQ^\T)\X$, i.e., the columns of $\Qx$, must be orthogonal to each column of $\QQ$. This proves that the columns of $\Qc$ are orthonormal to each other. The symmetry of $\Bc$ is trivial from the construction.

The correctness is obvious from the following equality
\begin{align*}
  \MoveEqLeft[2]\QQ\BB\QQ^\T \pm \X\X^\T \\
  &= \QQ \BB \QQ^\T \pm (\QQ\QQ^\T\X + \Qx\Rx)(\QQ\QQ^\T\X + \Qx\Rx)^\T \\
  &= \begin{aligned}[t]
      \QQ (\BB  \pm \QQ^\T\X\X^\T\QQ)\QQ^\T &\pm \QQ\QQ^\T\X\Rx^\T\Qx^\T \\
      &\pm \Qx\Rx\X^\T\QQ\QQ^\T \pm \Qx\Rx\Rx^\T\Qx^\T 
      \end{aligned}\\
  &= 
  \begin{bmatrix}
  \QQ & \Qx
  \end{bmatrix}
  \begin{bmatrix}
  \BB \pm (\QQ^\T\X)(\QQ^\T\X)^\T & \pm (\QQ^\T\X)\Rx^\T \\
  \pm \Rx(\QQ^\T\X)^\T & \pm \Rx\Rx^\T
  \end{bmatrix}
  \begin{bmatrix}
  \QQ^\T \\ \Qx^\T
\end{bmatrix}
  \\
  &= \Qc \Bc \Qc^\T
  \enspace. \qedhere
\end{align*}
\end{proof}

Applying the above lemma twice, we can transform $\mathbf{Q} \mathbf{B} \mathbf{Q}^\T + \X\X^\T - \mathbf{Y} \mathbf{Y}^\T$ into the form $\Qa\B\Qa^\T$, where $\Qa$ is an $m \times (n + \nx + \ny)$ dimensional matrix with orthonormal columns and $\B$ is an $(n + \nx + \ny)$ dimensional symmetric matrix. One can easily obtain the eigen decomposition of $\Qa\B\Qa^\T$, as stated in the following lemma.
\begin{lemma}\label{lem:block2eig}
Let $\A$ be an $m \times m$ dimensional matrix with a decomposition $\A = \Qa \B \Qa^\T$, where $\Qa$ is an $m \times k$ dimensional matrix with orthonormal columns and $\B$ is an $k \times k$ dimensional symmetric matrix, $k < m$. Then, the thin eigen decomposition $\A = \Ea\Da\Ea^\T$ is computed in $\bigO(mk^2)$, where $\Da$ is a diagonal matrix of dimension $k$ and $\Ea$ is an $m \times k$ dimensional matrix with orthonormal columns, by the following procedure.
\begin{enumerate}
\item Compute the eigen decomposition $\B = \Eb\Db\Eb^\T$.
\item Construct $\Ea = \Qa \Eb$ and $\Da = \Db$.
\end{enumerate}
\end{lemma}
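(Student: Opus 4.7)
The plan is to verify three things about the procedure: that $\Ea\Da\Ea^\T$ indeed equals $\A$, that $\Ea$ has orthonormal columns with $\Da$ diagonal (so that the output is a genuine thin eigen decomposition), and that the total arithmetic cost is $\bigO(mk^2)$. None of these should be difficult because the construction essentially conjugates the eigen decomposition of the small matrix $\B$ by the tall matrix $\Qa$.

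First, I would establish correctness by direct substitution. Since $\Eb\Db\Eb^\T = \B$ by step~1, the block decomposition of $\A$ gives
\begin{equation*}
  \Ea\Da\Ea^\T = (\Qa\Eb)\Db(\Qa\Eb)^\T = \Qa\,\Eb\Db\Eb^\T\,\Qa^\T = \Qa\B\Qa^\T = \A.
\end{equation*}
Next, for the thin-decomposition structure, I would use that $\B$ is symmetric, so its eigenvector matrix $\Eb$ is orthogonal, i.e., $\Eb^\T\Eb = \eye_k$, and that $\Qa$ has orthonormal columns, i.e., $\Qa^\T\Qa = \eye_k$. Then
\begin{equation*}
  \Ea^\T\Ea = \Eb^\T\Qa^\T\Qa\Eb = \Eb^\T\Eb = \eye_k,
\end{equation*}
so the columns of $\Ea$ are orthonormal, while $\Da = \Db$ is diagonal by step~1.

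Finally, I would bound the cost. The eigen decomposition of the symmetric $k \times k$ matrix $\B$ in step~1 can be done in $\bigO(k^3)$ by standard dense symmetric eigensolvers, and the product $\Qa\Eb$ in step~2 is an $(m\times k)$--by--$(k\times k)$ multiplication costing $\bigO(mk^2)$. Since $k < m$, the total is dominated by the multiplication, giving $\bigO(mk^2)$.

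There is no real obstacle here; the only subtlety worth flagging is the implicit reliance on $\B$ being symmetric (which is given), ensuring that $\Eb$ can be taken orthogonal so that $\Ea$ inherits orthonormal columns from $\Qa$. The rest is a two-line algebraic verification plus a routine complexity count.
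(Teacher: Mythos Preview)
Your proposal is correct and matches the paper's own proof almost line for line: the paper also checks $\Ea^\T\Ea = \Eb^\T\Qa^\T\Qa\Eb = \eye$, declares $\A = \Ea\Da\Ea^\T$ and diagonality of $\Da$ trivial from construction, and counts $\bigO(k^3)$ for step~1 plus $\bigO(mk^2)$ for step~2. The only cosmetic difference is that the paper prefaces the argument with a remark on uniqueness of the symmetric eigen decomposition to justify that these three checks suffice.
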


\begin{proof}
  First, the eigen decomposition of a symmetric matrix is unique up to a permutation of the diagonal elements of the eigenvalue matrix. Therefore, it is sufficient to show that $\Ea$ is a matrix with orthonormal columns, $\Da$ is a diagonal matrix, and $\A = \Ea \Da \Ea^\T$, where the last two are trivial from the construction. The orthogonality of $\Ea$ is confirmed by checking $\Ea^\T\Ea = \Eb^\T \Qa^\T \Qa \Eb = \eye$. The computational complexity is $\mathcal{O}(k^3)$ for the first step and $\mathcal{O}(mk^2)$ for the second step. This ends the proof.
\end{proof}

\providecommand{\Ev}{\mathbf{E}}
\providecommand{\Sv}{\mathbf{S}}
\providecommand{\Yp}{\mathbf{Y}_{+}}
\providecommand{\Ym}{\mathbf{Y}_{-}}
\providecommand{\Qone}{\mathbf{Q}_1}
\providecommand{\Bone}{\mathbf{B}_1}
\providecommand{\Qtwo}{\mathbf{Q}_2}
\providecommand{\Btwo}{\mathbf{B}_2}
\providecommand{\Qthr}{\mathbf{Q}_3}
\providecommand{\Dthr}{\mathbf{D}_3}
\providecommand{\mup}{k_{+}}
\providecommand{\mum}{k_{-}}

Now we are ready to prove the following main theorem.
\begin{theorem}\label{thm:act-dec}
  Let $\QQ$ be an $m \times n$ dimensional matrix with orthonormal columns, $\BB$ be an $n \times n$ dimensional symmetric matrix and $n < m$, $\XX$ be an $m \times \nx$ dimensional matrix, $\YY$ be an $m \times \ny$ dimensional matrix and $\alpha$ is a real number.
  
The eigenvalues of a matrix $\alpha \eye + \QQ\BB\QQ^\T + \XX\XX^\T - \YY\YY$ and the eigenvectors corresponding to the eigenvalues besides $\alpha$ are $(\alpha + [\Dthr]_{i,i})$ and $([\Qthr]_{:, i})$, for $i=1,\dots,n+\nx+\ny$, which are computed in $\bigO(m(n+\nx+\ny)^2)$ in the following way.
\begin{enumerate}
\item Apply Lemma~\ref{lem:blockdecom} to $\QQ \BB \QQ^\T + \XX\XX$ and obtain its decomposition $\Qone\Bone\Qone^\T$.
\item Apply Lemma~\ref{lem:blockdecom} to $\Qone\Bone\Qone^\T - \YY\YY$ and obtain its decomposition $\Qtwo\Btwo\Qtwo^\T$.
\item Apply Lemma~\ref{lem:block2eig} to $\Qtwo\Btwo\Qtwo^\T$ and obtain its eigen decomposition $\Qthr\Dthr\Qthr^\T$.
\end{enumerate}
\end{theorem}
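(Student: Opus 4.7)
The plan is to verify that chaining the three steps produces an eigendecomposition of the non-$\alpha\eye$ part of the matrix, and then to read off how the $\alpha\eye$ summand shifts the eigenvalues. The argument is essentially bookkeeping on top of Lemma~\ref{lem:blockdecom} and Lemma~\ref{lem:block2eig}; no genuinely new identity is needed, so the emphasis will be on confirming that preconditions, dimensions, and complexity line up at each step.

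First I would confirm that each step's preconditions are met. Step~1 invokes Lemma~\ref{lem:blockdecom} with the ``$+$'' sign on $\QQ,\BB,\XX$, returning $\Qone$ of size $m\times(n+\nx)$ with orthonormal columns and a symmetric $\Bone$ of matching size such that $\Qone\Bone\Qone^\T = \QQ\BB\QQ^\T + \XX\XX^\T$. Step~2 then applies the same lemma, this time with the ``$-$'' sign, to $\Qone,\Bone,\YY$, yielding $\Qtwo\Btwo\Qtwo^\T = \Qone\Bone\Qone^\T - \YY\YY^\T$ with $\Qtwo$ of size $m\times(n+\nx+\ny)$. Step~3 feeds $\Qtwo,\Btwo$ into Lemma~\ref{lem:block2eig}, producing $\Qthr\Dthr\Qthr^\T = \Qtwo\Btwo\Qtwo^\T$ with orthonormal $\Qthr$ and diagonal $\Dthr$. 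Composing the three equalities gives $\QQ\BB\QQ^\T + \XX\XX^\T - \YY\YY^\T = \Qthr\Dthr\Qthr^\T$.

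Next I would describe the eigendata of $\alpha\eye + \Qthr\Dthr\Qthr^\T$. Because $\Qthr$ has orthonormal columns, each column $\bm{q}_i$ satisfies $(\alpha\eye + \Qthr\Dthr\Qthr^\T)\bm{q}_i = (\alpha + [\Dthr]_{i,i})\bm{q}_i$, which gives the $n+\nx+\ny$ eigenpairs stated in the theorem. For any $\bm{v}$ in the orthogonal complement of the columns of $\Qthr$, the product $\Qthr^\T\bm{v}$ vanishes, so $\bm{v}$ is an eigenvector with eigenvalue $\alpha$; the remaining $m-(n+\nx+\ny)$ eigenvalues are therefore all $\alpha$, with eigenvectors that can be chosen as any orthonormal basis of that complement. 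This explains why the theorem only claims eigenvectors ``corresponding to eigenvalues besides $\alpha$''.

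The complexity bound is additive: Step~1 costs $\bigO(m(n+\nx)^2)$, Step~2 costs $\bigO(m(n+\nx+\ny)^2)$, and Step~3 costs $\bigO(m(n+\nx+\ny)^2)$ by the corresponding lemmas, summing to the advertised bound. I do not expect any genuinely hard step; the only subtlety worth flagging is the rank hypothesis of Lemma~\ref{lem:blockdecom}, which technically requires the appended block to have full column rank. If $\XX$ or $\YY$ is rank-deficient, the internal thin SVD simply produces fewer than $\nx$ or $\ny$ new columns, which only decreases the intermediate dimensions and the cost, so it does not affect the stated complexity. The main thing to be careful about is keeping the intermediate dimensions straight so that the cost of Step~2 is correctly bounded in terms of $n+\nx+\ny$ rather than in terms of the post-SVD rank.
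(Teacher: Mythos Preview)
Your proposal is correct and matches the paper's intent: the paper does not supply a separate proof of this theorem at all, treating it as an immediate consequence of Lemmas~\ref{lem:blockdecom} and~\ref{lem:block2eig} once the three steps are stated. Your write-up fills in precisely the bookkeeping the paper leaves implicit---chaining the two lemma identities, reading off the $\alpha$-shift on the column space of $\Qthr$ versus its orthogonal complement, and summing the per-step costs---and even flags the rank-deficiency caveat that the paper glosses over.
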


\paragraph{Implementation}
The algorithm is implemented in Fortran with LAPACK \cite{lapack} and compiled to Python module using \textsc{f2py} accompanying \textsc{scipy} library. Our algorithm calls the LAPACK subroutines \textsc{dsyev} once and \textsc{dgesvd} twice. Except the input and output arguments, we reserve $3 (n + r + m + 1)(r + m)$ floating point number array as a working space, including the working space for \textsc{dsyev}, \textsc{dgesvd}, and internal matrix manipulations. If $3 (n + r + m + 1)(r + m) \geq n^2$, we recommend to use \textsc{dsyev} instead of our proposed algorithm. The source code is available on GitHub Gist \cite{effeig}.

\section{Experiments}

Figure~\ref{fig:scaling} shows (a) the CPU time scaling over the number of rows, $m$, with different $n = n_x = n_y$, and (b) the CPU time divided by $m(n + \nx + \ny)^2$. As references we also plot the CPU time of the eigenvalue decomposition through the SVD for $\nx = 3$ and $\nx = 3 \lfloor m/3\rfloor$ ($n = \ny = 0$) and the CPU for the naive eigen decomposition for $\nx = 3 \lfloor m/3\rfloor$ for the proposed algorithm and the naive algorithm, respectively.  The matrices $\QQ$, $\BB$, $\XX$, and $\YY$ are generated randomly. The experiment has been conduced with Python 3.6.0 (Numpy version 1.11.3) on macOS Sierra (Processor: 2.6 GHz Intel Core i7 processor, Memory: 16 GB 2133 MHz LPDDR3). The Fortran source code \cite{effeig} is compiled with \textsc{f2py} version 2.

The right figure indicates that the CPU time of \textsc{feigh} asymptotically scales in $\mathcal{O}(m(n + n_x + n_y)^2)$ as the theory tells. Comparing the CPU time for \textsc{feigh} with $n = \nx = \ny = 1$ and the CPU time for \textsc{dgesvd} with $\nx = 3$ and $n = \ny = 0$, \textsc{feigh} costs more time by the factor of about $1.3$ for $m > 10^7$. It tells that we loose only the factor of $1.3$ to treat the negative sign in \eqref{eq:a1}. On the other hand, the CPU time for \textsc{feigh} with $n = \nx = \ny = \lfloor m / 3 \rfloor$ is an almost same scaling as \textsc{dsyev} with $\nx = \lfloor m / 3 \rfloor$ ($n = \ny = 0$) and a slightly better scaling than \textsc{dgesvd} with $\nx = \lfloor m / 3 \rfloor$ ($n = \ny = 0$).

\TODO{
  \begin{itemize}
  \item comparison with svd ($\nx = 3$, $\nx = M$), eig ($\nx = M$)
  \item complexity of eig and svd. svd $mn^2$, eig: $m^3$ in practice, but $m^\omega$ for $2 < \omega < 2.3...$ in theory.
  \item To read: \url{http://www.sciencedirect.com/science/article/pii/S0047259X07000887}
  \item To read: \url{http://theory.stanford.edu/~tim/s15/l/l9.pdf}
  \end{itemize}
}

\begin{figure}[t]
  \centering
  \begin{subfigure}{0.5\hsize}
    \includegraphics[width=\hsize]{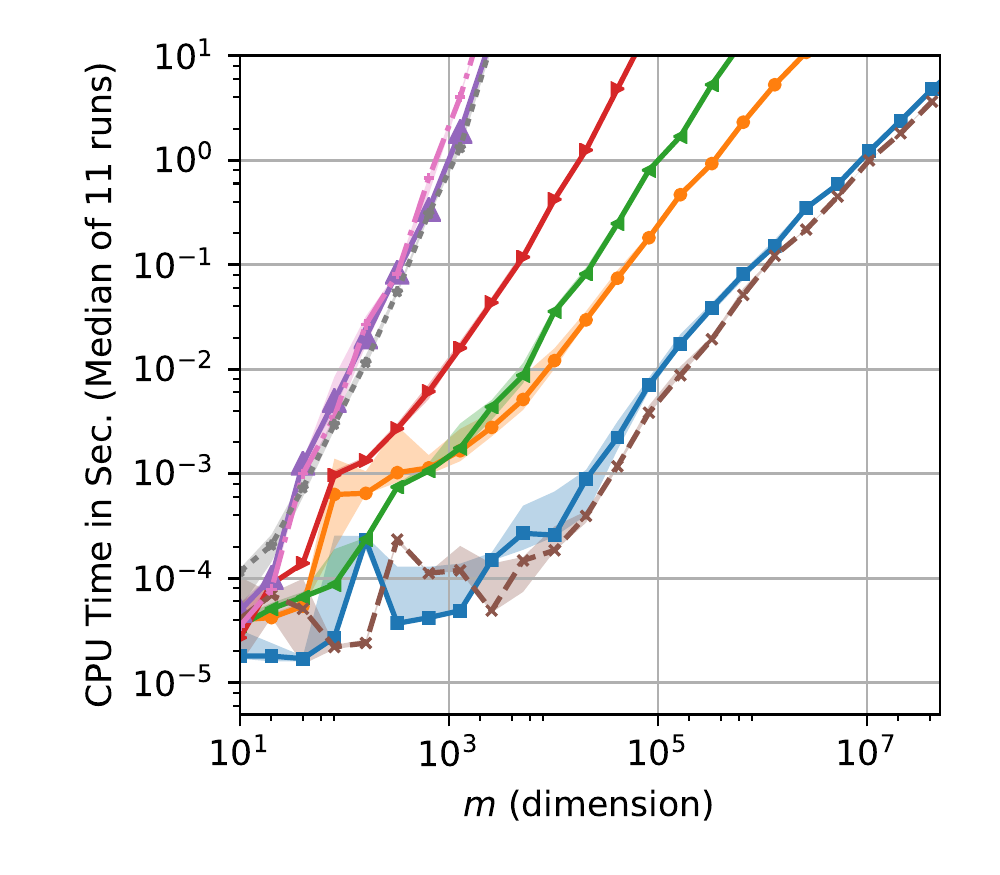}
  \end{subfigure}%
  \begin{subfigure}{0.5\hsize}
    \includegraphics[width=\hsize]{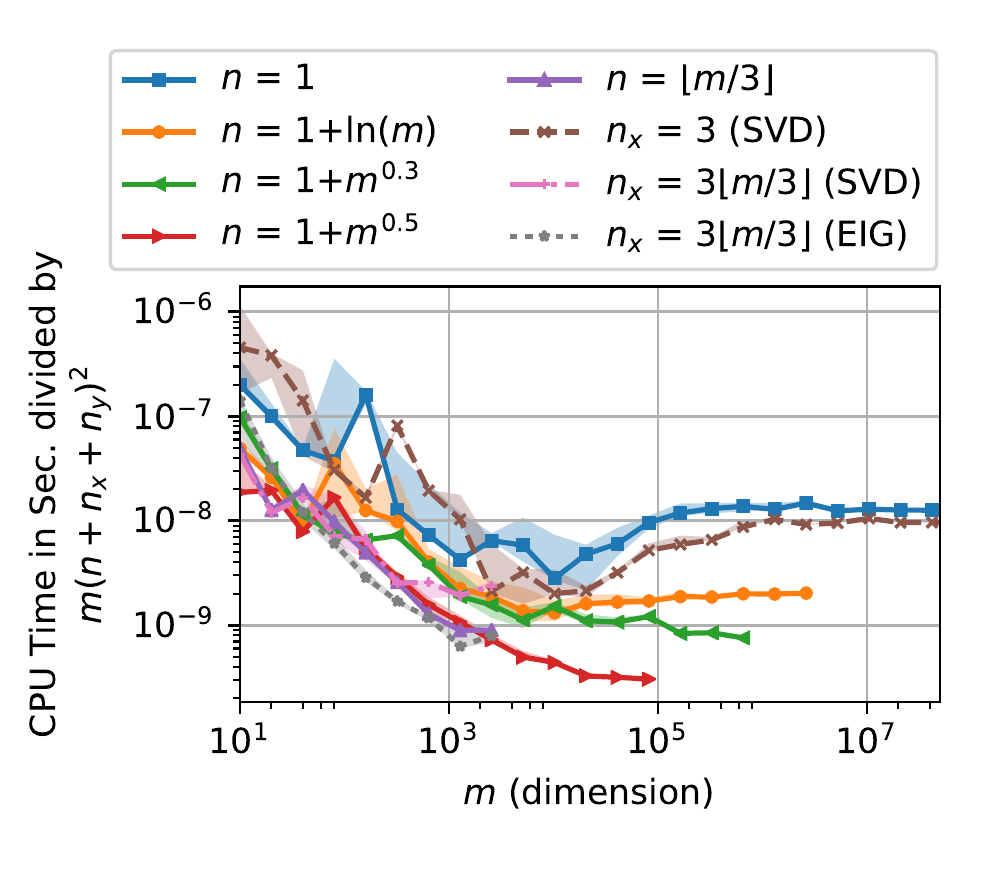}
  \end{subfigure}
  \caption{CPU time scaling. The median and the 10\%-90\% interval over 11 runs is reported for the proposed algorithm \textsc{feigh}, LAPACK routine \textsc{dgesvd} (labeled as SVD), and LAPACK routine \textsc{dsyev} (labeled as EIG) . }
  \label{fig:scaling}
\end{figure}


\begin{thebibliography}{10}

\bibitem{eigenface} M. Turk; A. Pentland. Eigenfaces for recognition. Journal of Cognitive Neuroscience. 3 (1): 71–86, 1991.

\bibitem{approx} L. Mirsky. Symmetric gauge functions and unitarily invariant norms. The Quarterly Journal of Mathematics, 11(1):50–59, 1960
  
\bibitem{lapack} LAPACK---Linear Algebra PACKage, \url{http://www.netlib.org/lapack/}, last visited on Jun. 7th, 2017.

\bibitem{effeig} GigHubGist, effeig.f90.  \url{https://gist.github.com/youheiakimoto/f53b4dd1c37a1e4bbf2b1070f3c88d03#file-effeig-f90}, last visited on Jun. 7th, 2017.
\end{thebibliography}

\end{document}